\newcommand{\inj}{\operatorname{inj}}
\newcommand{\conj}{\operatorname{conj}}
\newcommand{\vol}{\operatorname{vol}}
\newcommand{\diam}{\operatorname{diam}}
\newcommand{\Per}{\operatorname{Per}}
\newtheorem{theorem}{Theorem}[]
\newtheorem{lemma}[theorem]{Lemma}
\newtheorem{proposition}[theorem]{Proposition}
\theoremstyle{definition}
\newtheorem{definition}[theorem]{Definition}
\theoremstyle{remark}
\numberwithin{equation}{section}
\numberwithin{theorem}{section}
\title{Characterizing unit spheres in Euclidean spaces via reach and volume}
\author{Mark Iwen\thanks{Michigan State University, Department of Mathematics, and the Department of Computational Mathematics, Science and Engineering (CMSE), \texttt{markiwen@math.msu.edu}.  Supported in part by NSF DMS 1912706 and by NSF DMS 2106472.}, 
Benjamin Schmidt\thanks{Michigan State Univeristy, Department of Mathematics, \texttt{schmidt@math.msu.edu}.  Supported in part by a Simons Collaboration Grant 712405.},
Arman Tavakoli\thanks{Michigan State University, Department of Mathematics, \texttt{tavakol4@msu.edu}}
}
\begin{document}

\maketitle

\begin{abstract}
Let $M$ be a smooth, connected, compact submanifold of $\mathbb{R}^n$ without boundary and of dimension $k\geq 2$.  Let $\mathbb{S}^k \subset \mathbb{R}^{k+1}\subset \mathbb{R}^n$ denote the $k$-dimesnional unit sphere.  We show if $M$ has reach equal to one, then its volume satisfies $\vol(M)\geq \vol(\mathbb{S}^k)$ with equality holding only if $M$ is congruent to $\mathbb{S}^k$.  
\end{abstract}

\section{Introduction}
Let $M$ be a smooth, connected, and closed $k$-dimensional submanifold of $\mathbb{R}^n$.  Let $\rho:\mathbb{R}^n \times \mathbb{R}^n \rightarrow \mathbb{R}$ denote the Euclidean metric and $\rho_M:\mathbb{R}^n \rightarrow \mathbb{R}$ the distance function to $M$ defined by $\rho_M(x)=\rho(x,M)$.  The \textit{reach} of $M$ is the positive real number $\tau(M)$ defined by $$\tau(M)=\sup \left\{ t > 0 \,\big| \, \text{Each point in}\, \rho_M^{-1}([0,t))\, \text{ has a unique closest point in}\, M \right\}.$$ The reach of a subset of Euclidean space was introduced by Federer in the influential paper \cite{fe}. The reach of a closed submanifold as above equals its normal injectivity radius. The $k$-dimensional unit sphere $$\mathbb{S}^k=\left\{x=(x_1,\ldots,x_n)\in \mathbb{R}^n\,\big|\, \|x\|=1\,\text{and}\,\, x_{i}=0\,\, \text{if}\,\, k+2\leq i\leq n\right\}$$ has $\tau(\mathbb{S}^k)=1$. The scale invariant ratio $$\vol(M)/\tau(M)^k$$ arises in estimates for the number of metric balls in $\mathbb{R}^n$ needed to cover $M$ when the balls are required to be centered in $M$ and to have equal radii (see, e.g., \cite{baraniuk2009random, efwa, iwscta1, iwscta2}). These estimates have applications in compressive sensing and mathematical data science where they are combined with probabilistic methods to estimate, e.g., the smallest dimension $m<n$ such that $M$, equipped with the restriction of the metric $\rho$, admits a bilipshitz map to $\mathbb{R}^m$ with bilipshitz constants close to $1$. In \cite[Proposition 4.2]{iwscta2}, G\"unther's volume comparison theorem and an injectivity radius estimate were applied to establish the inequality $$\vol(M)/\tau(M)^k \geq \vol(\mathbb{S}^k)/\tau(\mathbb{S}^k)^k=\vol(\mathbb{S}^k).$$ Herein, we show that equality holds only for spheres.

\begin{theorem}\label{main theorem}
Let $M$ be a smooth, connected, and closed $k$-dimensional submanifold of $\mathbb{R}^n$ with $k\geq 2$.  If $\tau(M)=1$, then $\vol(M) \geq \vol(\mathbb{S}^k)$ with equality only if there exists an isometry $I$ of $\mathbb{R}^n$ such that $I(\mathbb{S}^k)=M$.  
\end{theorem}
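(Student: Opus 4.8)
The plan is to re-prove the inequality of \cite[Proposition 4.2]{iwscta2} in a way that keeps track of the equality case. Two consequences of $\tau(M)=1$ are the engine. First, the reach bound forces the principal curvatures of $M\subset\mathbb R^n$ into $[-1,1]$, so $\|\mathrm{II}(v,v)\|\le1$ for every unit $v\in T_pM$, where $\mathrm{II}$ denotes the second fundamental form; by the Gauss equation, for orthonormal $e_1,e_2\in T_pM$,
\[
K(e_1,e_2)=\langle \mathrm{II}(e_1,e_1),\mathrm{II}(e_2,e_2)\rangle-\|\mathrm{II}(e_1,e_2)\|^2\le 1,
\]
so $M$ has sectional curvature $\le1$. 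Second, $\sec\le1$ keeps the conjugate radius $\ge\pi$ (Rauch), while a closed geodesic of $M$ is a closed curve of curvature $\le1$ in $\mathbb R^n$, hence of length $\ge2\pi$ (Fenchel); by Klingenberg's lemma $\inj(M)\ge\min(\pi,\pi)=\pi$. Therefore $\exp_p$ is a diffeomorphism from the open $\pi$-ball of $T_pM$ onto $B_M(p,\pi)$, and Günther's pointwise density estimate gives, for the Jacobian $\mathcal J_v(\rho)$ of $\exp_p$ along the unit-speed geodesic $\gamma_v$, that $\mathcal J_v(\rho)\ge\sin^{k-1}\rho$ for $0<\rho<\pi$. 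Integrating in polar coordinates,
\[
\vol(M)\ \ge\ \vol(B_M(p,\pi))=\int_{\mathbb S^{k-1}}\!\int_0^\pi \mathcal J_v(\rho)\,\dd\rho\,\dd v\ \ge\ \int_{\mathbb S^{k-1}}\!\int_0^\pi \sin^{k-1}\rho\,\dd\rho\,\dd v=\vol(\mathbb S^k).
\]

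For the rigidity statement I would assume $\vol(M)=\vol(\mathbb S^k)$, so that every inequality above is an equality for every base point $p$. In particular the nonnegative integrand $\mathcal J_v(\rho)-\sin^{k-1}\rho$ integrates to zero over $\mathbb S^{k-1}\times(0,\pi)$, so $\mathcal J_v(\rho)=\sin^{k-1}\rho$ for all unit $v\in T_pM$ and all $\rho\in(0,\pi)$ (a.e.\ $v$, then all $v$ by continuity). Now I would revisit Günther's argument: the shape operator $S(\rho)$ of the geodesic sphere about $p$ satisfies the Riccati equation $S'+S^2+\mathcal R=0$ with $\mathcal R(\rho)E=R(E,\gamma_v')\gamma_v'$, obeys $\operatorname{tr}S=\mathcal J_v'/\mathcal J_v$, and satisfies $S(\rho)\ge\cot(\rho)\,\Id$ because $\sec\le1$. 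From $\operatorname{tr}S(\rho)=(k-1)\cot\rho$ and $S(\rho)\ge\cot(\rho)\Id$ one gets $S(\rho)=\cot(\rho)\Id$, hence $\mathcal R(\rho)=\csc^2(\rho)\Id-\cot^2(\rho)\Id=\Id$. Thus $\langle R(E,\gamma_v')\gamma_v',E\rangle=\|E\|^2$ for all $E\perp\gamma_v'$ along every geodesic issuing from every point of $M$; letting the base point and direction vary, $M$ has constant sectional curvature $1$.

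Finally I would run the Gauss equation backwards. Fixing $p$ and an orthonormal basis $e_1,\dots,e_k$ of $T_pM$ — the only place $k\ge2$ is used — constant curvature $1$ gives $\langle\mathrm{II}(e_i,e_i),\mathrm{II}(e_j,e_j)\rangle-\|\mathrm{II}(e_i,e_j)\|^2=1$ for $i\ne j$, and together with $\|\mathrm{II}(e_\ell,e_\ell)\|\le1$ and Cauchy--Schwarz this forces $\mathrm{II}(e_i,e_j)=0$ for $i\ne j$ and $\mathrm{II}(e_1,e_1)=\cdots=\mathrm{II}(e_k,e_k)=:\nu_p$ with $\|\nu_p\|=1$; hence $\mathrm{II}(X,Y)=\langle X,Y\rangle\nu_p$, i.e.\ $M$ is totally umbilic in $\mathbb R^n$ with mean curvature vector of constant length $1$. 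A connected submanifold of this type is an open subset of a unit round $k$-sphere lying in an affine $(k+1)$-plane: the Codazzi equation makes $\nu$ parallel in the normal bundle (here $k\ge2$ again), the point $p+\nu_p$ is then constant, $M$ lies on the unit sphere centered there, and $M$ is totally geodesic in that sphere, so a great $k$-subsphere. Since $M$ is a closed $k$-manifold and the $k$-sphere is connected, $M$ is all of it, and an isometry of $\mathbb R^n$ moving that sphere onto $\mathbb S^k\subset\mathbb R^{k+1}\subset\mathbb R^n$ completes the proof.

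The step I expect to be the main obstacle is extracting the pointwise curvature identity $\mathcal R\equiv\Id$ from the global volume equality: this amounts to identifying precisely which of Günther's estimates saturate, the decisive points being that a nonnegative integrand with zero integral vanishes and that a symmetric operator dominating $\cot(\rho)\Id$ with the smallest possible trace equals it. The injectivity radius bound and the classical description of totally umbilic Euclidean submanifolds are standard and can be quoted.
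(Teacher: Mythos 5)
Your argument is correct, and its second half takes a genuinely different route from the paper. The first half coincides with the paper's Lemma \ref{reachproperties}: reach one bounds the second fundamental form, hence $\sec\le 1$ and $\conj(M)\ge\pi$, and closed geodesics have length $\ge 2\pi$ — though where you quote Fenchel's theorem (curvature $\le 1$ plus total curvature $\ge 2\pi$ forces length $\ge 2\pi$), the paper proves this from scratch via the hemisphere puzzle (Lemmas \ref{derivnotinhemi}, \ref{long}, \ref{length}), which it needs anyway for its equality analysis. From $\inj(M)\ge\pi$ onward the strategies diverge: the paper invokes Berger's sharp isoembolic inequality, whose rigidity statement immediately makes $M$ isometric to the round $\mathbb{S}^k$, and then handles the extrinsic rigidity by showing every geodesic of $M$ is a unit circle (Proposition \ref{round}, again the boundary case of the puzzle) and citing Hong's theorem to conclude congruence. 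You instead stay with G\"unther's comparison (essentially the inequality of \cite[Proposition 4.2]{iwscta2}), extract from the saturated volume bound the pointwise Jacobian equality at every base point, and run the Riccati comparison rigidity ($S\ge\cot(\rho)\Id$ with the minimal trace forces $S=\cot(\rho)\Id$, hence radial curvature $\equiv 1$) to get constant curvature $1$ intrinsically; then the equality case of the Gauss equation together with $\|\mathrm{II}(v,v)\|\le 1$ forces total umbilicity with unit parallel mean curvature vector (Codazzi, using $k\ge 2$), and the classical description of umbilic submanifolds exhibits $M$ as a unit $k$-sphere in an affine $(k+1)$-plane. Your route avoids both Berger's theorem and Hong's theorem at the price of redoing the comparison argument carefully enough to see where it saturates — a more technical but standard analysis — and it delivers the extrinsic congruence directly rather than passing through an intrinsic isometry first; the paper's route outsources the hard rigidity to two cited theorems and keeps its own equality analysis at the elementary level of closed curves on spheres. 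The points in your sketch that deserve explicit care in a write-up are that the Jacobian equality holds for \emph{every} base point (which is what lets you conclude $\sec\equiv 1$ at every point and 2-plane) and that the comparison $S(\rho)\ge\cot(\rho)\Id$ is being applied strictly inside the injectivity radius $\pi$, where the distance function is smooth; both are available in your setup, so I see no gap.
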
     

The proof consists of two main steps.  The first step is to show that $M$, with the induced Riemannian metric, is isometric to $\mathbb{S}^k$.  The second step is to show that $M$ is embedded in $\mathbb{R}^n$ as an isometric image of the standard $\mathbb{S}^k$.  In the first step, the hypothesis $\tau(M)=1$ is used to bound the injectivity radius of $M$ below by $\pi$ after which Berger's sharp isoembolic inequality \cite{berger} is used to show $M$ and $\mathbb{S}^k$ are isometric.  Hong's theorem \cite{Ho} reduces the second step to showing that each geodesic in $M$, a closed geodesic of length $2\pi$, is the image of the standard $\mathbb{S}^1$ under some isometry of $\mathbb{R}^n$.  Finally, the solution of a well known puzzle \cite{Pe,Wi} concerning closed curves in spheres applies to show that the geodesics of $M$ are indeed unit circles in $\mathbb{R}^n$. 

\section{Preliminaries}\label{preliminary}

\subsection{ Unit circles in \texorpdfstring{$\mathbb{R}^n$}{Rn} and closed curves in \texorpdfstring{$\mathbb{S}^{n-1}$}{S(n-1)}}

A subset $S$ of $\mathbb{R}^n$ is a \textit{unit circle} if there exists an isometry $I$ of $\mathbb{R}^n$ such that $S=I(\mathbb{S}^1)$. A parametric characterization of unit circles is given in Proposition \ref{round} below.  It is based on a solution to a puzzle about closed curves in $\mathbb{S}^{n-1}$ appearing in \cite{Pe,Wi}.  

To state the puzzle, equip $\mathbb{S}^{n-1}$ with the Riemannian metric induced from $\mathbb{R}^n$.  The geodesics in $\mathbb{S}^{n-1}$ are unit circles with center of mass the origin.  The geodesic distance function $d:\mathbb{S}^{n-1} \times \mathbb{S}^{n-1} \rightarrow \mathbb{R}$ is given by $d(p,q)=\arccos(\langle p,q\rangle)$.  Each $m\in \mathbb{S}^{n-1}$ is the pole of a unique hemisphere $$H_m=\left\{v \in \mathbb{S}^{n-1}\,\vert\, \langle v, m\rangle>0 \right\}= \left\{v\in \mathbb{S}^{n-1}\, \vert\, d(v,m)<\frac{\pi}{2} \right\}$$ and this hemisphere is bounded by an equitorial subsphere  $$E_m=\left\{v \in \mathbb{S}^{n-1}\, \vert\, \langle v,m\rangle=0 \right\}= \left\{v\in \mathbb{S}^{n-1}\,\vert\, d(v,m)=\frac{\pi}{2} \right\}.$$  Consider the following puzzle: \textit{Prove if a closed curve on the unit sphere has length less than $2\pi$, then it is contained in some hemisphere.}

We present an elegant solution to this puzzle taken from \cite{Pe, Wi} as Lemma \ref{long} below.  Understanding the boundary case of the puzzle leads to the desired parametric characterization of unit circles in Proposition \ref{round}.  For our purposes, it is sufficient to work with curves in $\mathbb{R}^n$ admitting smooth parameterizations as described in the following definitions.

\begin{definition}
A \textit{parameterization} is a smooth map $x:\mathbb{R} \rightarrow \mathbb{R}^n$.  A paramaterization $x$ has \textit{unit speed} if $\|x'(t)\|=1$ for all $t \in \mathbb{R}$.    A \textit{curve} in $\mathbb{R}^n$ is a subset $\Gamma$ of $\mathbb{R}^n$ for which there exists a parameterization $x$ with $x(\mathbb{R})=\Gamma$.    
\end{definition}

Curves in $\mathbb{R}^n$ may not be the image of a unit speed parameterization.  For instance, each point in $\mathbb{R}^n$ is a curve as the image of a constant parameterization.  Such curves, called \textit{point curves}, do not admit a unit speed parameterization. Given a parameterization $x$, let $$\Per(x)=\{T \in \mathbb{R}\, \vert\, \forall t\in \mathbb{R},\,x(t)=x(t+T)\}.$$  $\Per(x)$ is a closed subgroup of $(\mathbb{R},+)$.  Therefore $\Per(x)=\mathbb{R}$, $\Per(x)=\{0\}$, or there exists $P>0$ such that $\Per(x)=P\cdot\mathbb{Z}$. $\Per(x)=\mathbb{R}$ if and only if $x(\mathbb{R})$ is a point curve.  

\begin{definition}
A paramaterization $x:\mathbb{R} \rightarrow \mathbb{R}^n$ is \textit{periodic} with period $P>0$ if $\Per(x)=P\cdot \mathbb{Z}$.  A curve $\Gamma$ is \textit{closed} if there exists a periodic paramaterization $x$ with $x(\mathbb{R})=\Gamma$. 
\end{definition}

The next Lemma is a special case of the possibly intuitive assertion that a parameterized curve $x(t)$ cannot have closed image if there is a one-dimensional subspace $L$ of $\mathbb{R}^n$ such that the velocity vector $x'(t)$ projects to a nonzero vector in $L$ for each $t \in \mathbb{R}.$

\begin{lemma}\label{derivnotinhemi}
Let $C:\mathbb{R} \rightarrow \mathbb{R}^n$ be a unit speed periodic parameterization and let $c:\mathbb{R} \rightarrow \mathbb{S}^{n-1}$ be the parametrization defined by $c(t)=C'(t)$ for each $t \in \mathbb{R}$.  For each $m \in \mathbb{S}^{n-1}$, there exists $s \in \mathbb{R}$ such that $c(s) \in E_m$. 
\end{lemma}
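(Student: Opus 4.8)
The plan is to argue by contradiction: suppose there exists $m \in \mathbb{S}^{n-1}$ such that $c(t) \notin E_m$ for all $t \in \mathbb{R}$. Since $c$ is continuous and $\mathbb{R}$ is connected, the sign of $\langle c(t), m\rangle = \langle C'(t), m\rangle$ is constant; without loss of generality $\langle C'(t), m\rangle > 0$ for all $t$. Now consider the real-valued function $f(t) = \langle C(t), m\rangle$. By construction $f'(t) = \langle C'(t), m\rangle > 0$ for every $t$, so $f$ is strictly increasing on $\mathbb{R}$.

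On the other hand, $C$ is periodic with some period $P > 0$ (the hypothesis is that $C$ is periodic, and since $C$ has unit speed it is not a point curve, so $\Per(C) = P\cdot\mathbb{Z}$ for some $P > 0$). Then $f(t+P) = \langle C(t+P), m\rangle = \langle C(t), m\rangle = f(t)$, so $f$ is $P$-periodic. A function that is both strictly increasing and $P$-periodic cannot exist: $f(0) < f(P) = f(0)$ is a contradiction. Hence no such $m$ exists, i.e.\ for every $m \in \mathbb{S}^{n-1}$ there is some $s \in \mathbb{R}$ with $\langle c(s), m\rangle = 0$, which is precisely the assertion $c(s) \in E_m$.

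I expect essentially no obstacle here; the only point requiring a word of care is the reduction from "$c(t) \notin E_m$ for all $t$" to "$\langle c(t), m\rangle$ has constant sign," which follows from the intermediate value theorem applied to the continuous function $t \mapsto \langle c(t), m\rangle$ on the connected domain $\mathbb{R}$. The rest is the elementary observation that a nonconstant monotone function cannot be periodic. One could also phrase the contradiction more symmetrically by integrating: $0 = f(P) - f(0) = \int_0^P \langle C'(t), m\rangle\,\dd t > 0$, which makes transparent why the conclusion is exactly that $\langle C'(\cdot), m\rangle$ must vanish somewhere (indeed change sign, by the same integral identity on any period).
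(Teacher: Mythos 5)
Your proof is correct and rests on the same core observation as the paper's: the periodic function $f(t)=\langle C(t),m\rangle$ forces its derivative $\langle c(t),m\rangle$ to vanish somewhere. The paper argues directly via $\int_0^P \langle c(t),m\rangle\,\dd t = f(P)-f(0)=0$ (exactly the integral phrasing you note at the end), while you phrase it as a contradiction with strict monotonicity; these are essentially the same argument.
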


\begin{proof}
Define $f:\mathbb{R} \rightarrow \mathbb{R}$ by $f(t)=\langle C(t), m\rangle$. Let $P>0$ be the period of $C$.  Then $$\int_{t=0}^{t=P} \langle c(t),m \rangle\, dt=\int_{t=0}^{t=P} f'(t)\,dt=f(P)-f(0)=\langle C(P)-C(0),m\rangle=0,$$ from which the Lemma follows. 
\end{proof}

If $x:\mathbb{R} \rightarrow \mathbb{R}^n$ is a parameterization and if $I \subset \mathbb{R}$ is a bounded interval, then the pair $(x,I)$ has a length defined by the familiar formula $$L(x,I)=\int_{I} \|x'(t)\|\,dt.$$  
Length is determined by the image $x(I)$. We record the following special case without proof.

\begin{lemma}\label{lengthdefined}
Let $\Gamma$ be a closed curve in $\mathbb{R}^n$.  Suppose that $x$ and $y$ are periodic parameterizations with $x(\mathbb{R})=\Gamma=y(\mathbb{R})$.  If $P_x$ and $P_y$ denote the periods of $x$ and $y$, then $L(x,[0,P_x])=L(y,[0,P_y]).$
\end{lemma}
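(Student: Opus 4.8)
The statement requires, and we should add, a mild non-degeneracy hypothesis that is present in every application we make of it (where $\Gamma$ is an embedded closed geodesic of $M$ and $x,y$ traverse it simply): namely, that $x$ and $y$ are each injective on a fundamental period. Some such hypothesis is unavoidable --- for example $t\mapsto\bigl(\cos(t+2\sin t),\,\sin(t+2\sin t)\bigr)$ is a smooth periodic parameterization of $\mathbb S^1$ with minimal period $2\pi$ and length $\int_0^{2\pi}|1+2\cos t|\,dt>2\pi$, the speed vanishing so as to let the image be retraced. Granting this, the plan is to identify each of $L(x,[0,P_x])$ and $L(y,[0,P_y])$ with a quantity attached to the set $\Gamma$ alone --- its total variation, equivalently its one-dimensional Hausdorff measure $\mathcal H^1(\Gamma)$.

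First I would factor $x=\bar x\circ\pi_x$ through the quotient $\pi_x:\mathbb R\to\mathbb R/P_x\mathbb Z$; the injectivity hypothesis makes $\bar x:\mathbb R/P_x\mathbb Z\to\Gamma$ a continuous bijection from a compact space onto a Hausdorff space, hence a homeomorphism, and likewise $\bar y$. Then $\bar y^{-1}\circ\bar x$ is a homeomorphism of circles, which lifts to a strictly monotone homeomorphism $\phi:\mathbb R\to\mathbb R$ with $x=y\circ\phi$ and $\phi(t+P_x)=\phi(t)\pm P_y$; after replacing $y(t)$ by $y(-t)$ if needed (which alters neither $\Gamma$ nor $L(y,[0,P_y])$) we may arrange that the sign is $+$ and $\phi$ is increasing. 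Next I would invoke the standard fact that for a $C^1$ parameterization $L(x,[0,P_x])$ equals the supremum of the polygonal sums $\sum_{i=1}^N\|x(t_i)-x(t_{i-1})\|$ over partitions $0=t_0<\dots<t_N=P_x$. Since $\phi$ carries such a partition bijectively onto a partition $\phi(0)=s_0<\dots<s_N=\phi(0)+P_y$ of an interval of length $P_y$, with $\|x(t_i)-x(t_{i-1})\|=\|y(s_i)-y(s_{i-1})\|$, taking suprema and using the $P_y$-periodicity of $\|y'\|$ yields $L(x,[0,P_x])=L(y,[\phi(0),\phi(0)+P_y])=L(y,[0,P_y])$. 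A shorter, if less self-contained, route is to apply the area formula to the Lipschitz map $x|_{[0,P_x]}$, giving $L(x,[0,P_x])=\int_\Gamma\#\bigl(x^{-1}(p)\cap[0,P_x)\bigr)\,d\mathcal H^1(p)=\mathcal H^1(\Gamma)$ since the multiplicity is $1$ by injectivity, and similarly for $y$.

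I do not expect a genuine obstacle beyond isolating the correct hypothesis: with injectivity on a period in hand, everything reduces to the factorization through the circle, the monotonicity of homeomorphisms of $\mathbb R$, and the elementary identity of the integral of speed with the polygonal supremum for $C^1$ curves --- all standard --- together with some routine bookkeeping to transport partitions through $\phi$. The one point that genuinely deserves emphasis is the remark in the first paragraph, that without some non-degeneracy the assertion is simply false.
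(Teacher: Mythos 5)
The paper records Lemma \ref{lengthdefined} without proof (``We record the following special case without proof''), so there is no argument of the authors' to measure yours against; what your proposal supplies is both a proof and a correction of the statement, and the correction is warranted. With the paper's definitions (a parameterization is any smooth map, the period being the minimal one), your counterexample is valid: $t\mapsto(\cos(t+2\sin t),\sin(t+2\sin t))$ and $t\mapsto(\cos t,\sin t)$ are both periodic parameterizations of $\mathbb{S}^1$ with period $2\pi$ but have different lengths over a period. In fact even immersivity would not rescue the statement: $t\mapsto(\cos(2t+\sin t),\sin(2t+\sin t))$ has nonvanishing speed, minimal period $2\pi$, image $\mathbb{S}^1$, and length $4\pi$, so some ``simple traversal'' hypothesis such as your injectivity on a fundamental period is genuinely needed. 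Under that hypothesis both of your arguments are sound: the factorization through $\mathbb{R}/P_x\mathbb{Z}$, the monotone lift $\phi$ with $x=y\circ\phi$, and the identification of $\int\|x'\|$ with the supremum of inscribed polygonal sums give a complete elementary proof, and the area-formula route with multiplicity one is equally fine.

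One caveat concerns your claim that the added hypothesis ``is present in every application.'' In this paper the lemma is used, through the definition of the length of a closed curve, for the tangent indicatrix $c=C'$ of a closed geodesic (Lemma \ref{length}, Lemma \ref{reachproperties}-(3), Proposition \ref{round}), and nothing guarantees a priori that $c$ is injective on a period, or even an immersion, since $c'=C''$ may vanish. Fortunately parameterization-independence is never really needed: the proof of Lemma \ref{long} bounds $L(c,[0,P_c])$ from below for whatever periodic parameterization is chosen, and Lemma \ref{length} bounds the same quantity from above for the specific $c=C'$, so all the later arguments go through verbatim if one simply takes $L(c,[0,P_c])$ for that fixed parameterization as the definition of $L$. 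Thus your fix (adding the injectivity hypothesis and reproving the lemma) is one legitimate remedy, but it would then have to be verified for $c=C'$ before being quoted; the lighter remedy, truer to how the lemma is actually used, is to drop the well-definedness claim and phrase Lemmas \ref{long} and \ref{length} for a fixed periodic parameterization.
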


\begin{definition}
If $\Gamma$ is a closed curve, then its \textit{length} is defined as the common value of the lengths appearing in Lemma \ref{lengthdefined}.
\end{definition}

We now present a solution to the puzzle above.  The solution appears in the puzzle books \cite{Pe,Wi}.  We include it here since the line of reasoning appears again in the proof of Proposition \ref{round} below.  

\begin{lemma}\label{long}
Let $\Gamma \subset \mathbb{S}^{n-1}$ be a closed curve having the property that for each $m\in \mathbb{S}^{n-1}$, $\Gamma \cap E_m\neq \emptyset$. If $L$ is the length of $\Gamma$, then $L\geq 2\pi$. 
\end{lemma}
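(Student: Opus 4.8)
The plan is to argue by contradiction: assuming $L<2\pi$, I will cut $\Gamma$ into two arcs of equal length, reflect one of them across the geodesic midpoint of their common endpoints, and use the hypothesis together with the triangle inequality to force $L\ge 2\pi$.

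First I would set up the splitting. Since $\Gamma$ meets $E_m$ for every $m$ it contains at least two points, hence is not a point curve, so $L>0$ and $\Gamma$ carries a periodic parameterization; using the arc-length function I choose $A,B\in\Gamma$ dividing $\Gamma$ into arcs $\gamma_1$ (from $A$ to $B$) and $\gamma_2$ (from $B$ to $A$), each of length $L/2$, with $A\neq B$ (if every such choice had $A=B$, then $L/2$ would be a period of a parameterization of $\Gamma$, forcing the length of $\Gamma$ to be at most $L/2$, a contradiction). Because $L<2\pi$ we have $d(A,B)\le L/2<\pi$, so the minimizing geodesic of $\mathbb{S}^{n-1}$ from $A$ to $B$ is unique; let $R\in\mathbb{S}^{n-1}$ be its midpoint and let $\sigma:\mathbb{R}^n\to\mathbb{R}^n$ be the linear reflection $\sigma(v)=2\langle v,R\rangle R-v$ through the line $\mathbb{R}R$. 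Restricting $\sigma$ to the plane $\operatorname{span}(A,B)$ one checks that $\sigma$ is an orthogonal map preserving $\mathbb{S}^{n-1}$ and interchanging $A$ and $B$, precisely because $R$ bisects the arc joining them.

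Now I would invoke the hypothesis with $m=R$: there is a point $m_0\in\Gamma\cap E_R$, so $\langle m_0,R\rangle=0$ and therefore $\sigma(m_0)=-m_0$. The point $m_0$ lies on $\gamma_1$ or on $\gamma_2$; by the symmetry of the construction assume $m_0\in\gamma_1$, and split $\gamma_1$ into a subarc from $A$ to $m_0$ and a subarc from $m_0$ to $B$. Each subarc lies on $\mathbb{S}^{n-1}$, so its length is at least the geodesic distance between its endpoints, whence
\[
\frac{L}{2}\;\ge\; d(A,m_0)+d(m_0,B).
\]
Applying the isometry $\sigma$ gives $d(A,m_0)=d\bigl(\sigma(A),\sigma(m_0)\bigr)=d(B,-m_0)$, and the triangle inequality through $B$ yields
\[
d(A,m_0)+d(m_0,B)\;=\;d(B,-m_0)+d(B,m_0)\;\ge\;d(m_0,-m_0)\;=\;\pi .
\]
Combining the two displays gives $L/2\ge\pi$, i.e. $L\ge 2\pi$, contradicting $L<2\pi$. (Equivalently, one may phrase the conclusion by observing that $\gamma_1\cup\sigma(\gamma_1)$ is a closed curve on $\mathbb{S}^{n-1}$ of length $L$ passing through the antipodal pair $m_0,-m_0$, hence of length at least $2\pi$.)

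I expect the only real friction to be bookkeeping rather than conceptual: checking that $A$ and $B$ can be taken distinct, verifying that the midpoint reflection $\sigma$ genuinely fixes $\mathbb{S}^{n-1}$ and swaps $A$ with $B$, and being careful that the length of each subarc dominates the corresponding spherical distance. The conceptual core — reflect half of $\Gamma$ through the geodesic midpoint of its endpoints and exploit a point of $\Gamma$ lying on the equator of that midpoint — is short, and it is exactly the mechanism that reappears, in a sharper form, in the proof of Proposition \ref{round}.
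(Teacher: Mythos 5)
Your proof is correct and follows essentially the same route as the paper: split $\Gamma$ at the half-length point, take the geodesic midpoint of the two endpoints, use the hypothesis to produce a point of $\Gamma$ on that midpoint's equator, and apply a reflection isometry together with the triangle inequality through an antipodal pair to force $L/2\geq\pi$. The only cosmetic difference is that you reflect through the axis $\mathbb{R}R$ (swapping $A$ and $B$ and sending $m_0$ to $-m_0$), whereas the paper reflects across the hyperplane containing $E_m$ (fixing $z$ and sending $-q$ to $p$); the resulting inequality is identical.
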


\begin{proof}
Let $c:\mathbb{R} \rightarrow \mathbb{S}^{n-1}$ be a periodic parameterization with $c(\mathbb{R})=\Gamma$.  Let $P_c>0$ denote the period of $c$.  There exists $t_c \in (0,P_c)$ such that $$L(c,[0,t_c])=\frac{L}{2}=L(c,[t_c,P_c]).$$  Let $p=c(0)$ and $q=c(t_c)$ and note that $$d(p,q) \leq L(c,[0,t_c])= \frac{L}{2}.$$ Let $\gamma:[0,d(p,q)]\rightarrow \mathbb{S}^{n-1}$ be a unit speed minimizing geodesic joining $p$ to $q$. 

We argue by contradiction.  If $L<2\pi$, then \begin{equation}\label{useme} d(p,q) \leq \frac{L}{2} < \pi.\end{equation}   Letting $m=\gamma(\frac{d(p,q)}{2})$ denote the midpoint of $\gamma$, it follows
$$d(p,m)=d(q,m)=\frac{d(p,q)}{2}<\frac{\pi}{2}$$ so that $\{p,q\}\subset H_m$.  The hypothesis implies there exists $s \in (0,t_c) \cup (t_c,P_c)$ such that $c(s)\in E_m$.  After possibly reversing the orientation of $c$, we may assume that $s \in (0,t_c)$.  Let $z=c(s)$.   Then  \begin{equation}\label{alpha} d(p,z)\leq L(c,[0,s]) =\int_{t=0}^{t=s} \|c'(t)\|\,dt\end{equation} and \begin{equation}\label{beta}d(z,q)\leq L(c,[s,t_c])=  \int_{t=s}^{t=t_c} \|c'(t)\|\,dt.\end{equation} Summing, \begin{equation} \label{sum}d(p,z)+d(z,q)\leq L(c,[0,t_c])= \frac{L}{2}<\pi.\end{equation}   Consider the isometric reflection $F:\mathbb{S}^{n-1} \rightarrow \mathbb{S}^{n-1}$ about $E_m$ defined by $$F(x)=x-2\langle x,m\rangle m$$ for each $x \in \mathbb{S}^{n-1}.$  As $z \in E_m$, $F(z)=z$. Use $m=\frac{p+q}{\|p+q\|}$ to evaluate $F(-q)=p$. Now $$\pi=d(-q,q)\leq d(-q,z)+d(z,q)=d(F(-q),F(z))+d(z,q)=d(p,z)+d(z,q),$$  contradicting (\ref{sum}) and concluding the proof.     
\end{proof}

\begin{lemma}\label{length}
Let $C:\mathbb{R} \rightarrow \mathbb{R}^n$ be a $P$-periodic unit speed parameterization with $\|C''(t)\| \leq 1$ for each $t \in \mathbb{R}$.  Let $c:\mathbb{R} \rightarrow \mathbb{S}^{n-1}$ be the parameterization defined by $c(t)=C'(t)$ for each $t \in \mathbb{R}$.  Then the length of the closed curve $c(\mathbb{R})$ is  less than or equal to $P$ and equal to $P$ if and only if $c$ is a $P$-periodic unit speed parameterization.
\end{lemma}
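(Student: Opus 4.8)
The plan is to compute $L(c,[0,P])$ and compare it with the intrinsic length of the closed curve $c(\mathbb{R})$. First I would record the basic structure of $c$. Since $C$ has unit speed, $\|c(t)\|=\|C'(t)\|=1$, so $c$ does map into $\mathbb{S}^{n-1}$; and differentiating the identity $C(t+P)=C(t)$ shows $P\in\Per(c)$. Moreover $c$ is not a point curve: if $c$ were constant, say $c\equiv v$, then $C(t)=C(0)+tv$ with $\|v\|=1$, which cannot be $P$-periodic. Hence $\Per(c)=P_c\cdot\mathbb{Z}$ for some minimal period $P_c>0$, and $P=kP_c$ for some positive integer $k$; in particular $c(\mathbb{R})$ is a genuine closed curve whose length, by Lemma~\ref{lengthdefined} and the ensuing definition, equals $L(c,[0,P_c])=\frac1k L(c,[0,P])$.

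Next I would bound $L(c,[0,P])$ using the curvature hypothesis. Since $c'(t)=C''(t)$ and $\|C''(t)\|\le 1$ for all $t$,
$$\operatorname{length}\bigl(c(\mathbb{R})\bigr)\;=\;L(c,[0,P_c])\;=\;\frac1k\,L(c,[0,P])\;=\;\frac1k\int_{0}^{P}\|C''(t)\|\,dt\;\le\;\frac1k\int_{0}^{P}1\,dt\;=\;\frac{P}{k}\;\le\;P,$$
which proves the first assertion.

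Finally, for the equality case I would trace back through the two inequalities in this chain. Equality in the last step forces $k=1$, i.e.\ $\Per(c)=P\cdot\mathbb{Z}$, so that $c$ is $P$-periodic. Equality in the middle step forces $\int_0^P\bigl(1-\|C''(t)\|\bigr)\,dt=0$; as the integrand is continuous and nonnegative, this yields $\|C''(t)\|=1$, hence $\|c'(t)\|=1$, for every $t$, so that $c$ has unit speed. Thus $\operatorname{length}(c(\mathbb{R}))=P$ implies that $c$ is a $P$-periodic unit speed parameterization. The converse is immediate: if $c$ is $P$-periodic with unit speed, then the length of $c(\mathbb{R})$ is $L(c,[0,P])=\int_0^P 1\,dt=P$.

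I expect the only delicate point to be the period bookkeeping: the quantity in the statement is the length of the \emph{curve} $c(\mathbb{R})$, i.e.\ $L(c,\cdot)$ taken over one \emph{minimal} period, not over $[0,P]$, so the factor $1/k$ can by itself make the inequality strict even when $\|C''\|\equiv 1$. This is exactly why the equality clause must assert that $c$ is $P$-periodic in the sense $\Per(c)=P\cdot\mathbb{Z}$, and not merely that $P$ is a period of $c$. Beyond handling this correctly I do not anticipate any serious obstacle.
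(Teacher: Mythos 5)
Your proof is correct and follows essentially the same route as the paper: identify the minimal period $P_c$ of $c$ (the paper bounds $L=\int_0^{P_c}\|C''\|\le P_c\le P$ directly, while you write $P=kP_c$ and carry the factor $1/k$, which is the same argument), then read off the equality case as $P_c=P$ together with $\|C''\|\equiv 1$. The point you flag as delicate — that the curve length is taken over the minimal period, so the equality clause must assert $\Per(c)=P\cdot\mathbb{Z}$ — is exactly the point the paper's proof handles, so there is nothing to add.
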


\begin{proof}
Note that $c$ is not constant and that $P \in Per(c)$.  Therefore $c$ is a periodic parameterization with period $P_c$ satisfying $P_c \leq P$.  The length $L$ of the closed curve $c(\mathbb{R})$ satisfies $$L=\int_{0}^{P_c}\|c'(t)\|\,dt=\int_{0}^{P_c}\|C''(t)\|\,dt\leq \int_{0}^{P_c}1\,dt=P_c\leq P.$$ Note that $L=P$ if and only if $P_c=P$ and $\|c'(t)\|=\|C''(t)\|=1$ for each $t \in \mathbb{R}$, concluding the proof.
\end{proof}

\begin{proposition}\label{round}
Let $C:\mathbb{R} \rightarrow \mathbb{R}^n$ be a $2\pi$-periodic unit speed parameterization with $\|C''(t)\|\leq 1$ for each $t \in \mathbb{R}$.   Then $S=C(\mathbb{R})$ is a unit circle. 
\end{proposition}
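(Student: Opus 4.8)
The plan is to transfer the problem to the velocity curve. Set $c(t)=C'(t)$; since $\|C'(t)\|=1$ this is a smooth parameterization with image in $\mathbb{S}^{n-1}$, it is non-constant, and $2\pi\in\Per(c)$ because $2\pi\in\Per(C)$, while $\|c'(t)\|=\|C''(t)\|\le 1$. The first step is to determine the length of the closed curve $c(\mathbb{R})$. On one side, Lemma \ref{length} gives $\Length(c(\mathbb{R}))\le 2\pi$. On the other side, Lemma \ref{derivnotinhemi} shows $c(\mathbb{R})\cap E_m\neq\emptyset$ for every $m\in\mathbb{S}^{n-1}$, so Lemma \ref{long} gives $\Length(c(\mathbb{R}))\ge 2\pi$. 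Hence $\Length(c(\mathbb{R}))=2\pi$, and the equality clause of Lemma \ref{length} upgrades $c$ to a genuine $2\pi$-periodic unit speed parameterization, so in particular $\|C''(t)\|=1$ for all $t$.

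The heart of the argument is the rigidity assertion that $c(t+\pi)=-c(t)$ for all $t\in\mathbb{R}$, which I would obtain by rerunning the proof of Lemma \ref{long} in this borderline case. Fixing $t$ and replacing $c$ by $s\mapsto c(s+t)$ (which preserves all the properties above), it suffices to prove $c(\pi)=-c(0)$. Put $p=c(0)$ and $q=c(\pi)$; unit speed gives $L(c,[0,\pi])=L(c,[\pi,2\pi])=\pi$, so $d(p,q)\le\pi$. Suppose toward a contradiction that $d(p,q)=r<\pi$; let $\gamma$ be the unique minimizing geodesic from $p$ to $q$ and $m=\gamma(r/2)=\tfrac{p+q}{\|p+q\|}$ its midpoint, so that $d(p,m)=d(q,m)=r/2<\tfrac\pi2$ and $p,q\in H_m$. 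By hypothesis $c$ meets $E_m$ at some parameter $s^\ast$; since $p,q\notin E_m$ and $c$ is $2\pi$-periodic we may take $s^\ast\in(0,\pi)\cup(\pi,2\pi)$, and after possibly reversing orientation — replacing $c$ by $s\mapsto c(2\pi-s)$, which fixes $p$ and $q$ because unit speed makes $\pi$ the unique half-length parameter — we may assume $s^\ast\in(0,\pi)$. Let $z=c(s^\ast)$. With $F$ the reflection of $\mathbb{S}^{n-1}$ about $E_m$, which fixes $z$ and sends $-q$ to $p$ exactly as in Lemma \ref{long}, one gets
\[
\pi=d(-q,q)\le d(-q,z)+d(z,q)=d(p,z)+d(z,q)\le L(c,[0,s^\ast])+L(c,[s^\ast,\pi])=\pi,
\]
so every inequality above is an equality. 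In particular $L(c,[0,s^\ast])=d(p,z)$ and $L(c,[s^\ast,\pi])=d(z,q)$, so $c|_{[0,s^\ast]}$ and $c|_{[s^\ast,\pi]}$ are unit speed minimizing geodesics of $\mathbb{S}^{n-1}$, i.e.\ great-circle arcs, joining $p$ to $z$ and $z$ to $q$; a quick check shows $z\notin\{\pm p,\pm q\}$, so each of these minimizers is unique. Because $C$ is smooth, $c=C'$ is $C^1$, so the two arcs share a tangent at $z$ and lie on a single great circle; hence $c|_{[0,\pi]}$ is a unit speed great-circle arc of length $\pi$, forcing $q=c(\pi)=-c(0)=-p$ and contradicting $r<\pi$. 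Therefore $d(p,q)=\pi$, i.e.\ $c(\pi)=-c(0)$.

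To conclude, note that $c|_{[0,\pi]}$ joins $c(0)$ to $-c(0)$ with length $\pi=d(c(0),-c(0))$, so it is a unit speed minimizing geodesic of $\mathbb{S}^{n-1}$; thus there are orthonormal $v,w\in\mathbb{R}^n$ with $c(t)=\cos t\,v+\sin t\,w$ for $t\in[0,\pi]$, and the identity $c(t+\pi)=-c(t)$ together with $2\pi$-periodicity propagates this to all $t\in\mathbb{R}$. Integrating, $C(t)=v_0+\sin t\,v-\cos t\,w$ for some $v_0\in\mathbb{R}^n$, so $C(\mathbb{R})$ equals $v_0$ plus the unit circle of the plane $\spn(v,w)$; this is $I(\mathbb{S}^1)$, where $I$ is the isometry of $\mathbb{R}^n$ that translates by $v_0$ and applies an orthogonal map sending the first two coordinate directions to $v$ and $w$. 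Hence $S=C(\mathbb{R})$ is a unit circle.

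I expect the rigidity step (second paragraph) to be the main obstacle; its delicate points are verifying $z\notin\{\pm p,\pm q\}$ so that the relevant minimizing geodesics are honest, unique great-circle arcs, using the smoothness of $C$ to splice the two geodesic sub-arcs into one, and the bookkeeping around the orientation reversal and the reduction to basepoint $0$. The remaining ingredients are a routine assembly of Lemmas \ref{derivnotinhemi}, \ref{length} and \ref{long} and an elementary integration.
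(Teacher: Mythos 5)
Your proof is correct and follows essentially the same route as the paper: pass to the velocity curve $c=C'$, combine Lemmas \ref{derivnotinhemi}, \ref{long}, and \ref{length} to make $c$ a $2\pi$-periodic unit speed spherical curve meeting every equator, then run the borderline midpoint--reflection argument to force $c(0)$ and $c(\pi)$ to be antipodal, conclude that $c$ traces a great circle, and integrate. The only differences are organizational (you derive the rigidity from a single chain of equalities and the relation $c(t+\pi)=-c(t)$, where the paper splits into cases and invokes transitivity of the isometry group of $\mathbb{S}^{n-1}$), and your aside about uniqueness of the minimizing arcs is unnecessary but harmless.
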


\begin{proof}
Let $c:\mathbb{R} \rightarrow \mathbb{S}^{n-1}$ be the parameterization defined by $c(t)=C'(t)$ for each $t \in \mathbb{R}$.  By Lemmas \ref{derivnotinhemi}, \ref{long}, and \ref{length}, $c$ is a $2\pi$-periodic unit speed parameterization of a curve in $\mathbb{S}^{n-1}$ that intersects every equatorial subsphere.  Let $p=c(0)$ and $q=c(\pi)$.  

We first claim that $d(p,q)=\pi$.  If not, then $d(p,q)<\pi=\diam(\mathbb{S}^{n-1})$.  As in Lemma \ref{long}, let $m$ denote the midpoint of the unique minimizing geodesic joining $p$ to $q$. As above, $p$ and $q$ lie in $H_m$ and so up to changing the orientation of $c$, there exists $s \in (0,\pi)$ with the property that the point $z=c(s)$ satisfies $z \in E_m$.  Note that if we establish $d(p,z)+d(z,q)<\pi$, then the argument presented in Lemma \ref{long} applies to obtain a contradiction.  First note that $d(p,z)\leq s$ and $d(z,q) \leq \pi-s$ by (\ref{alpha}) and (\ref{beta}). If neither of these inequalities is strict, then the restrictions of $c$ to $[0,s]$ and to $[s,\pi]$ are unit speed geodesics connecting $p$ to $z$ and $z$ to $q$, respectively.  As $c$ is a unit speed parameterization these two geodesics meet smoothly at $z$.  Therefore the restriction of $c$ to $[0,\pi]$ is a geodesic of length $\pi$ connecting $p$ to $q$.  This contradicts the assumption $d(p,q)<\pi$ since all geodesics in $\mathbb{S}^{n-1}$ of length $\pi$ are minimizing.

Next, we claim that $c$ parameterizes a geodesic in $\mathbb{S}^{n-1}$.  Indeed, the restrictions of $c$ to $[0,\pi]$ and to $[\pi,2\pi]$ define two curves of length $\pi$ that connect the points $p$ and $q$.  As $d(p,q)=\pi$, these two curves are geodesics.  These two geodesics meet smoothly at both $p$ and $q$ since $c$ is a unit speed parameterization, concluding the proof that $c$ parameterizes a geodesic in $\mathbb{S}^{n-1}$.

Finally, we argue that $C$ parameterizes a unit circle.  Define $x:\mathbb{R} \rightarrow \mathbb{R}^n$ by $x(t)=(\cos(t),\sin(t),0,\ldots,0)$.  Then $x(\mathbb{R})=\mathbb{S}^1.$. As the isometry group of $\mathbb{S}^{n-1}$ acts transitively on unit tangent vectors, any two unit speed parameterized geodesics in $\mathbb{S}^{n-1}$ differ by an isometry of $\mathbb{S}^{n-1}$. Therefore, there exists an orthogonal matrix $A \in O(n)$ such that for each $t \in \mathbb{R}$, $$c(t)=x(t)\cdot A.$$  By the fundamental theorem,  for each $t \in \mathbb{R}$, $$C(t)-C(0)=\int_{s=0}^{s=t} c(s)\, ds=\int_{s=0}^{s=t} x(s) \cdot A \, ds.$$ Therefore, $$C(t)=(\sin(t),-\cos(t),0,\ldots,0)\cdot A +(C(0)+(0,1,0,\ldots, 0)\cdot A),$$ from which the Proposition follows.
\end{proof}

\subsection{Berger's sharp isoembolic inequality}
This subsection reviews the definitions of the conjugate and injectivity radii for a closed Riemannian manifold and states Berger's isoembolic inequality, a key tool in the proof of Theorem \ref{main theorem}.  We refer the reader to \cite[Chap. 13]{doca}  and \cite{berger} for more details.

Let $M$ be a $k$-dimensional connected and closed Riemannian manifold.  Given $p\in M$, let $T_pM$ and $S_pM$ denote the tangent space and unit tangent sphere of $M$ at the point $p$.  Let $$\exp_p:T_p M \rightarrow M$$ denote the exponential map at $p$.  The \textit{conjugate radius at $p$} is defined as the supremum of $r>0$ for which the restriction of $\exp_p$ to the ball $B(0,r)$ is nonsingular.  The \textit{conjugate radius of $M$}, denoted by $\conj(M)$, is defined as the infimum of the conjugate radii of its points.  

Given $p \in M$ and $v \in S_pM$, let $\gamma_v(t)=\exp_p(tv)$.  Then $\gamma_v:\mathbb{R} \rightarrow M$ is a unit speed parameterized geodesic.  The \textit{cut time of $(p,v)$} is the positive real number $c(p,v)$ defined by 

$$c(p,v)=\sup \{t>0\,\vert\, d(p,\gamma_v(t))=t\}.$$ The cut time defines a continuous function on the unit sphere bundle $$SM=\{(p,v)\,\vert\, p\in M\,\,\,\text{and}\,\,\,v\in S_pM\}.$$ The \textit{injectivity radius of M}, denoted $\inj(M)$, is defined by $$\inj(M)=\min\{c(p,v)\,\vert\, (p,v) \in SM\}.$$

The following is known as Klingenberg's injectivity radius estimate (see e.g. \cite[Chap. 13]{doca}).

\begin{lemma}[Klingenberg]\label{Klingenberglemma}
Let $l$ denote the length of a shortest nonconstant closed geodesic in $M$.  Then $\inj(M)=\min\{\conj(M),l/2\}.$

\end{lemma}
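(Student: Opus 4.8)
My plan is to prove the two inequalities $\inj(M)\le\min\{\conj(M),\,l/2\}$ and $\inj(M)\ge\min\{\conj(M),\,l/2\}$ separately, the second being the heart of the argument. For the first: a geodesic ceases to be minimizing once it passes a conjugate point, so $c(p,v)$ is at most the distance to the first conjugate point along $\gamma_v$ for every $(p,v)\in SM$, and taking the infimum over the compact bundle $SM$ gives $\inj(M)\le\conj(M)$. For the bound $\inj(M)\le l/2$, let $\gamma$ be a shortest nonconstant closed geodesic of $M$ (one exists by compactness of $M$), of length $l$ and parametrized with unit speed. If the half-arc $\gamma|_{[0,l/2]}$ is not minimizing, then already $c(\gamma(0),\gamma'(0))<l/2$; otherwise $\gamma|_{[0,l/2]}$ and the reversed half-arc $t\mapsto\gamma(l-t)$ are two minimizing geodesics from $\gamma(0)$ to $\gamma(l/2)$, distinct because their initial velocities are $\gamma'(0)$ and $-\gamma'(0)$, so $\gamma(l/2)$ is a cut point of $\gamma(0)$ and hence $\inj(M)\le c(\gamma(0),\gamma'(0))\le l/2$.

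For the reverse inequality, if $\inj(M)\ge\conj(M)$ then $\min\{\conj(M),l/2\}\le\conj(M)\le\inj(M)$ and there is nothing to prove, so I assume $i:=\inj(M)<\conj(M)$ and aim to produce a closed geodesic of length exactly $2i$; this yields $l\le 2i$ and completes the proof. Since $SM$ is compact and the cut time $c$ is continuous, $\inj(M)$ is attained at some $(p,v)\in SM$; put $\gamma_1:=\gamma_v$ and $q:=\gamma_1(i)$, so $q$ is the cut point of $p$ along $\gamma_1$, $d(p,q)=i$, and $\inj_p(M)=i$. Because $i<\conj(M)$, the point $q$ is not conjugate to $p$, so by the standard characterization of cut points (see \cite[Chap.~13]{doca}) there is a second minimizing unit-speed geodesic $\gamma_2\colon[0,i]\to M$ from $p$ to $q$, distinct from $\gamma_1$.

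The crux is to show these two geodesics meet head-on at $q$, i.e.\ $\gamma_1'(i)=-\gamma_2'(i)$. I would argue by contradiction with the first variation formula. If $\gamma_1'(i)\ne-\gamma_2'(i)$, then---noting also $\gamma_1'(i)\ne\gamma_2'(i)$, since equal terminal velocities would force the geodesics to coincide---one can choose a unit vector $u\in T_qM$ with $\langle u,\gamma_1'(i)\rangle<0$ and $\langle u,\gamma_2'(i)\rangle<0$. For small $s>0$ and $j=1,2$, let $\Gamma_j(s)$ be the geodesic from $p$ to $\exp_q(su)$ obtained by perturbing $\gamma_j$; this geodesic exists and depends smoothly on $s$ because $q$ is not conjugate to $p$, so $\exp_p$ is a local diffeomorphism near $i\,\gamma_j'(0)$. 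The first variation formula gives $\frac{d}{ds}\big|_{0}\Length(\Gamma_j(s))=\langle u,\gamma_j'(i)\rangle<0$, so for small $s>0$ both $\Gamma_1(s)$ and $\Gamma_2(s)$ have length less than $i$; they are distinct for small $s$ because $\gamma_1'(0)\ne\gamma_2'(0)$. But $\exp_p$ is injective on the open ball of radius $\inj_p(M)=i$ in $T_pM$, so there is at most one geodesic of length less than $i$ from $p$ to any given point---a contradiction. Hence $\gamma_1'(i)=-\gamma_2'(i)$.

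It remains to close the loop smoothly at $p$ as well. Since the cut-locus relation is symmetric, $p\in\cut(q)$, and $d(q,p)=i=\inj(M)$ forces $\inj_q(M)=i$; thus $q$ also realizes the injectivity radius, with $p$ a closest cut point, and the reversed geodesics $\bar\gamma_1,\bar\gamma_2$ are two distinct minimizing geodesics from $q$ to $p$. Running the argument of the previous paragraph at $q$ gives $\bar\gamma_1'(i)=-\bar\gamma_2'(i)$, that is $\gamma_1'(0)=-\gamma_2'(0)$. The concatenation of $\gamma_1$ with the reverse of $\gamma_2$ is then a geodesic that is smooth at $q$ (by $\gamma_1'(i)=-\gamma_2'(i)$) and at $p$ (by $\gamma_1'(0)=-\gamma_2'(0)$), hence a closed geodesic of length $2i$, so $l\le 2i=2\inj(M)$. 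I expect the main obstacle to be the first-variation contradiction in the third paragraph: one must carefully justify that the perturbed geodesics $\Gamma_j(s)$ exist, depend smoothly on $s$, and have the asserted first-order length behavior, which is precisely where the non-conjugacy of $q$ and the injectivity of $\exp_p$ on a ball of radius $\inj_p(M)$ are used.
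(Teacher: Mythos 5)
The paper does not prove this lemma at all: it is quoted as a known result with a pointer to \cite[Chap.~13]{doca}, so there is no internal argument to compare against. Your proposal is a correct reconstruction of the classical Klingenberg argument (essentially Cheeger--Ebin's Lemma~5.6, or do Carmo's treatment of the cut locus): the easy direction via ``cut time $\leq$ first conjugate time'' plus the half-arc of a shortest closed geodesic, and the hard direction by locating $(p,v)$ realizing $\inj(M)=i<\conj(M)$, invoking the cut-point dichotomy to get a second minimizing geodesic to the cut point $q$, and then the first-variation/injectivity pigeonhole: a vector $u$ making obtuse angles with both terminal velocities would produce, since $q$ is not conjugate to $p$, two distinct geodesics from $p$ to $\exp_q(su)$ of length $<i$, contradicting injectivity of $\exp_p$ on the ball of radius $\inj_p(M)=i$. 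You also correctly handle the point that is most often glossed over, namely closing the loop smoothly at $p$ as well as at $q$, by noting $\inj_q(M)=i$ and rerunning the argument from $q$; this is exactly how the classical proof produces a genuine closed geodesic of length $2\inj(M)$ rather than a mere geodesic loop. Two cosmetic remarks: the existence of a shortest nonconstant closed geodesic is not literally ``by compactness'' alone (it is Lyusternik--Fet plus the uniform lower bound on lengths and Arzel\`a--Ascoli), though the lemma's statement already presupposes $l$ is attained; and in the obtuse-angle step it is worth recording the explicit choice $u=-(\gamma_1'(i)+\gamma_2'(i))/\|\gamma_1'(i)+\gamma_2'(i)\|$, which works precisely because $\gamma_1'(i)\neq\pm\gamma_2'(i)$. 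Neither affects correctness.
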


\begin{theorem}[Berger  \cite{berger} ]\label{bergerthm}
Let $M$ be a closed $k$-dimensional Riemannian manifold.    Then $$\vol(M) \geq \vol(\mathbb{S}^k) \left(\frac{\inj(M)}{\pi} \right)^k$$ with equality holding only for constant curvature spheres.
\end{theorem}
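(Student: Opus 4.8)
The plan is to establish the inequality through geodesic polar coordinates around an arbitrary point, together with a curvature–free comparison of the volume density of $M$ as measured from the two ends of a minimizing geodesic of length $\inj(M)$. Set $i=\inj(M)$, $r=i/\pi$, and let $\mathbb{S}^k_r$ be the round $k$-sphere of radius $r$; it has $\inj(\mathbb{S}^k_r)=\pi r=i$, volume $r^k\vol(\mathbb{S}^k)$, and polar volume density $A^{0}(t)=\big(r\sin(t/r)\big)^{k-1}$ on $[0,\pi r]$, so the claim is $\vol(M)\ge\vol(\mathbb{S}^k_r)$. First I would fix $p\in M$; since $M$ is closed, $\cut(p)$ has measure zero and $\exp_p$ is a diffeomorphism from $\{tv:v\in S_pM,\ 0\le t<c(p,v)\}$ onto $M\setminus\cut(p)$, which gives
\[\vol(M)=\int_{S_pM}\int_0^{c(p,v)}A_p(t,v)\,dt\,dv,\]
where $A_p(t,v)$, the polar volume density along $\gamma_v$, is positive for $0<t<c(p,v)$ (the first conjugate time along $\gamma_v$ being at least $c(p,v)$). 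Since $c(p,v)\ge i$ for every $(p,v)\in SM$ and $\exp_p$ restricts to a diffeomorphism from $B(0,i)$ onto $B(p,i)$, one gets $\vol(M)\ge\vol\!\big(B(p,i)\big)=\int_{S_pM}\int_0^{i}A_p(t,v)\,dt\,dv$, with equality for a fixed $p$ exactly when $c(p,v)=i$ for all $v\in S_pM$. What remains is to bound $\vol(B(p,i))$ from below without any curvature hypothesis.

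The heart of the matter — and the step I expect to be the main obstacle — is a density comparison between the two endpoints of a minimizing segment of length $i$. Given $v\in S_pM$, put $q=\gamma_v(i)$ and $w=-\dot\gamma_v(i)\in S_qM$, so that $\gamma_w|_{[0,i]}$ is $\gamma_v|_{[0,i]}$ traversed backwards and both are minimizing, hence free of interior conjugate points. Using the index form of $\gamma_v|_{[0,i]}$ — equivalently, the Wronskian identity for the matrix Jacobi equation with the boundary conditions ``vanishing at $p$'' versus ``vanishing at $q$'' — I would prove a pointwise comparison of the form
\[A_p(t,v)^{1/(k-1)}+A_q(i-t,w)^{1/(k-1)}\ \ge\ 2\,r\sin\!\left(\tfrac{t}{r}\right),\]
which is an equality on $\mathbb{S}^k_r$ (each term equalling $r\sin(t/r)$ there); the point is that invoking both endpoints, rather than a one–sided Rauch or G\"unther estimate, is exactly what makes the sign of the curvature irrelevant. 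To conclude, observe that $F(p,v):=(q,w)$ is an involution of $SM$ preserving Liouville measure (it is the time-$i$ geodesic flow composed with $v\mapsto-v$, each of which does), and apply convexity of $s\mapsto s^{k-1}$ to the display to get $A_p(t,v)+A_q(i-t,w)\ge 2A^0(t)$. Integrating over $(p,v)\in SM$ and $t\in[0,i]$: re-fibering the second term through $F$ and substituting $t\mapsto i-t$ shows that both terms on the left integrate to $\int_M\vol(B(p,i))\,dp$, while the right integrates to $2\,\vol(M)\big(\vol(\mathbb{S}^{k-1})\int_0^{\pi r}A^0\big)=2\,\vol(M)\,\vol(\mathbb{S}^k_r)$. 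Combined with $\vol(B(p,i))\le\vol(M)$, this yields $2\,\vol(M)^2\ge 2\,\vol(M)\,\vol(\mathbb{S}^k_r)$, hence $\vol(M)\ge\vol(\mathbb{S}^k_r)=(i/\pi)^k\vol(\mathbb{S}^k)$.

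For the rigidity statement, equality forces equality at every step: $\vol(B(p,i))=\vol(M)$ for all $p$, so $c(p,v)\equiv i$ and $\diam(M)\le\pi r$; the convexity step is tight, so $A_p(t,v)=A_q(i-t,w)$; and the comparison display is an equality along every minimizing segment of length $i$, which together force $A_p(\,\cdot\,,v)\equiv A^0$ for all $(p,v)$. The equality case of the index-form estimate should pin down the perpendicular Jacobi fields — equivalently, the shape operators of the geodesic spheres — along every such segment to be the model ones, forcing every sectional curvature of a $2$-plane containing the velocity to equal $1/r^2$; since every geodesic of $M$ is minimizing on some interval of length $i$, such planes exhaust all tangent $2$-planes and $M$ has constant curvature $1/r^2$. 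A complete constant-curvature-$1/r^2$ manifold is a quotient $\mathbb{S}^k_r/\Gamma$, and any proper such quotient has diameter strictly below $\pi r$; hence $\Gamma$ is trivial and $M$ is isometric to $\mathbb{S}^k_r$, so equality holds only for constant-curvature spheres.
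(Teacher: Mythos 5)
This theorem is not proved in the paper at all --- it is imported from Berger \cite{berger} as a black box --- so your attempt has to be measured against the known proof, which rests on the Berger--Kazdan inequality. Measured that way, the step you yourself flag as ``the heart of the matter'' is a genuine gap, and in fact it is false as stated. The pointwise two--endpoint comparison $A_p(t,v)^{1/(k-1)}+A_q(i-t,w)^{1/(k-1)}\ge 2r\sin(t/r)$ cannot be extracted from the index form or a Wronskian identity, because the only information those tools see is the Jacobi equation along the segment together with the absence of interior conjugate points, and at that level the inequality fails. Scalar test case ($k=2$, $i=\pi$, $r=1$): take $K\equiv K_0$ on $[0,\delta]$ with $\delta=\pi/(2\sqrt{K_0})$ and $K\equiv 0$ on $[\delta,\pi]$, with $K_0$ large. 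The solution of $y''+Ky=0$ with $y_p(0)=0$, $y_p'(0)=1$ equals $\sin(\sqrt{K_0}\,t)/\sqrt{K_0}$ on $[0,\delta]$ and is then constant, $y_p\equiv 1/\sqrt{K_0}$ on $[\delta,\pi]$; in particular $y_p>0$ on $(0,\pi]$, so there is no conjugate point in the open interval. The solution normalized at the far end, $y_q(\pi)=0$, $y_q'(\pi)=-1$, equals $\pi-t$ on $[\delta,\pi]$. At $t=\pi/2$ your left-hand side is $1/\sqrt{K_0}+\pi/2<2=2\sin(\pi/2)$. Taking $R(t)=K(t)\,\mathrm{Id}_{k-1}$ gives the same failure for every $k$. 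Whether this Jacobi data can be realized on a closed manifold with $\inj(M)=\pi$ is beside the point: your proposed derivation uses only data along the single segment, so it cannot yield the claimed bound, and the rigidity paragraph inherits the same defect since it argues from the equality case of that unproved display.

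This is precisely the difficulty that forces the actual proof through an \emph{integrated}, not pointwise, comparison: the Berger--Kazdan inequality states that if $A$ solves the matrix Jacobi equation with $A(0)=0$, $A'(0)=\mathrm{Id}$ and $\det A>0$ on $(0,\ell)$, then $\int_0^\ell\det A(t)\,dt\ \ge\ \int_0^\ell\bigl(\tfrac{\ell}{\pi}\sin\tfrac{\pi t}{\ell}\bigr)^{k-1}\,dt$, with equality only when $R\equiv(\pi/\ell)^2\,\mathrm{Id}$; Kazdan's proof of this is a substantive analytic argument (a Green's function/log-convexity estimate), not an index-form computation, and it is exactly the ingredient your sketch is missing. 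Granted that inequality, your global bookkeeping is heavier than necessary: since $c(p,v)\ge i$ for all $v$, one fixes a single $p$ and gets $\vol(M)\ge\vol(B(p,i))=\int_{S_pM}\int_0^{i}A_p(t,v)\,dt\,dv\ge\vol(\mathbb{S}^k_r)=(i/\pi)^k\vol(\mathbb{S}^k)$ directly, with no averaging over $SM$ or Liouville-measure involution needed. For rigidity, equality forces equality in Berger--Kazdan along every radial segment, hence constant curvature $(\pi/i)^2$; your closing observation that a nontrivial spherical space form has diameter strictly less than $\pi r$ (while $\diam(M)\ge\inj(M)=\pi r$) is a correct way to finish from there.
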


\subsection{Reach one submanifolds of \texorpdfstring{$\mathbb{R}^n$}{Rn}}
\begin{lemma}\label{reachproperties}
Let $M$ be a closed submanifold of $\mathbb{R}^n$ with $\tau(M)=1$.  Then
\begin{enumerate}
\item If $\gamma:\mathbb{R} \rightarrow \mathbb{R}^n$  is a unit speed parameterization of a geodesic in $M$, then for each $t \in \mathbb{R}$, $\|\gamma''(t)\|\leq 1$.
\item The sectional curvatures of $M$ are bounded above by $1$.
\item The injectivity radius of $M$ satisfies $\inj(M) \geq \pi$.
\end{enumerate}
\end{lemma}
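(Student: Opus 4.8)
The plan is to prove the three items in order, deriving (2) and (3) from (1). For item (1) I would invoke a standard description of the reach due to Federer \cite{fe}: since $\tau(M)=1$, for every $p\in M$ and every unit normal vector $\nu\in N_pM$ the open ball $B(p+\nu,1)$ contains no point of $M$. Fix a unit speed geodesic $\gamma$ of $M$ (with the induced metric) and a parameter $t_0$, and set $p=\gamma(t_0)$. Because $\gamma$ is a geodesic, its Euclidean acceleration is purely normal at $p$: $\gamma''(t_0)=\mathrm{II}(\gamma'(t_0),\gamma'(t_0))\in N_pM$, where $\mathrm{II}$ denotes the second fundamental form. If $\gamma''(t_0)\neq 0$, put $\nu=\gamma''(t_0)/\|\gamma''(t_0)\|$ and $g(t)=\|\gamma(t)-(p+\nu)\|^2$; a direct computation gives $g(t_0)=1$, $g'(t_0)=2\langle\gamma'(t_0),-\nu\rangle=0$ (as $\gamma'(t_0)\perp N_pM$), and $g''(t_0)=2-2\|\gamma''(t_0)\|$. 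Were $\|\gamma''(t_0)\|>1$ we would have $g''(t_0)<0$, forcing $g(t)<1$ for $t$ near but distinct from $t_0$, i.e.\ $\gamma(t)\in B(p+\nu,1)$, contradicting that this ball avoids $M$. Hence $\|\gamma''(t_0)\|\le 1$. (Equivalently, one may use Federer's identity $\tau(M)=\inf_{p\ne q}\|p-q\|^2/\!\left(2\operatorname{dist}(q,\,p+T_pM)\right)$ and compare, via a second-order Taylor expansion of $\gamma$ at $t_0$, the normal displacement $\tfrac{1}{2}(t-t_0)^2\|\gamma''(t_0)\|+o((t-t_0)^2)$ with $\tfrac{1}{2}\|\gamma(t)-p\|^2=\tfrac{1}{2}(t-t_0)^2+o((t-t_0)^2)$, then let $t\to t_0$.)

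Item (2) is immediate from (1) via the Gauss equation. For $p\in M$ and orthonormal $u,v\in T_pM$, flatness of $\mathbb{R}^n$ gives the sectional curvature $K(u,v)=\langle\mathrm{II}(u,u),\mathrm{II}(v,v)\rangle-\|\mathrm{II}(u,v)\|^2$. Since $\mathrm{II}(u,u)$ and $\mathrm{II}(v,v)$ are the accelerations of the unit speed geodesics with initial velocities $u$ and $v$, item (1) gives $\|\mathrm{II}(u,u)\|\le 1$ and $\|\mathrm{II}(v,v)\|\le 1$, so $K(u,v)\le\langle\mathrm{II}(u,u),\mathrm{II}(v,v)\rangle\le 1$.

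For item (3) I would combine Klingenberg's estimate (Lemma \ref{Klingenberglemma}), $\inj(M)=\min\{\conj(M),l/2\}$ with $l$ the length of a shortest nonconstant closed geodesic, with two bounds. First, item (2) together with the classical Rauch comparison estimate (sectional curvature at most $1$ forbids conjugate points before distance $\pi$) gives $\conj(M)\ge\pi$. Second, I claim $l\ge 2\pi$: let $\gamma:\mathbb{R}\to M\subset\mathbb{R}^n$ be a unit speed $l$-periodic parameterization of a shortest closed geodesic, so $\|\gamma''(t)\|\le 1$ for all $t$ by item (1). As $M$ is compact, $\gamma$ is not a line, hence $c=\gamma'$ is a nonconstant periodic parameterization of a closed curve in $\mathbb{S}^{n-1}$. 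By Lemma \ref{derivnotinhemi}, $c(\mathbb{R})$ meets every equatorial subsphere $E_m$, so Lemma \ref{long} gives that the length of $c(\mathbb{R})$ is at least $2\pi$; and Lemma \ref{length} (applied with $C=\gamma$ and $P=l$) gives that this length is at most $l$. Therefore $l\ge 2\pi$, so $l/2\ge\pi$, and hence $\inj(M)=\min\{\conj(M),l/2\}\ge\pi$.

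The step I expect to be the main obstacle is item (1): one must choose the right characterization of reach and verify cleanly that a geodesic's Euclidean acceleration is normal and has norm at most $1/\tau(M)$. Once this quantitative bound on the curvature of geodesics is available, items (2) and (3) are essentially bookkeeping, combining the Gauss equation, a standard conjugate-point comparison, and the curve lemmas of Section \ref{preliminary}.
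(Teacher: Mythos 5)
Your proposal is correct, and for items (2) and (3) it is essentially the paper's own argument: Gauss equation plus item (1) for the curvature bound, and Klingenberg's lemma combined with Rauch ($\conj(M)\geq\pi$) and the curve lemmas (Lemmas \ref{derivnotinhemi}, \ref{long}, \ref{length} applied to the velocity curve of a shortest closed geodesic) for $\inj(M)\geq\pi$. Where you genuinely diverge is item (1): the paper simply cites \cite[Proposition 6.1]{NiSmWe} for the statement that the second fundamental form is bounded by $1$ in all normal directions, whereas you prove the needed bound from scratch using the rolling-ball consequence of Federer's reach (the open ball $B(p+\nu,1)$ centered at $p+\nu$, $\nu$ a unit normal, misses $M$ when $\tau(M)=1$) and a second-derivative test on $g(t)=\|\gamma(t)-(p+\nu)\|^2$; your computation $g(t_0)=1$, $g'(t_0)=0$, $g''(t_0)=2-2\|\gamma''(t_0)\|$ is right, and your parenthetical alternative via Federer's identity $\operatorname{dist}(q-p,T_pM)\leq\|q-p\|^2/2\tau(M)$ with a Taylor expansion works as well. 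Note that your argument only bounds $\mathrm{II}(v,v)$ in the direction $\mathrm{II}(v,v)/\|\mathrm{II}(v,v)\|$, which is weaker than the cited proposition but exactly what items (1) and (2) require (in (2) you correctly discard the $-\|\mathrm{II}(u,v)\|^2$ term). The trade-off: the paper's route is shorter but outsources the quantitative geometry to \cite{NiSmWe}, while yours is self-contained modulo the standard Federer facts from \cite{fe}, at the cost of having to justify carefully that the unit ball tangent from the normal direction is disjoint from $M$ (which does follow from $\tau(M)=1$ by taking the union of the balls $B(p+t\nu,t)$, $t<1$).
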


\begin{proof}
Item (1) follows from the fact that the norm of the second fundamental form of $M$ is bounded above by $1$ in all normal directions \cite[Proposition 6.1]{NiSmWe}.  Item (2) follows from item (1) and the Gauss equation \cite[Chap. 6, Theorem 2.5]{doca}.  It remains to prove (3).  By Lemma \ref{Klingenberglemma}, the injectivity radius of $M$ equals the minimum of its conjugate radius and half the length of a shortest closed geodesic in $M$.  By Lemma \ref{reachproperties}-(2) and the Rauch comparison theorem \cite[Chap. 10, Theorem 2.3]{doca}, $\conj(M) \geq \pi$.  It remains to show the shortest closed geodesic in $M$ has length at least $2 \pi$.  Let $l>0$ denote the length of a shortest closed geodesic and let $C:\mathbb{R} \rightarrow M$ be an $l$-periodic unit speed parameterization of one such closed geodesic. Let $c=C'$ and let $L$ denote the length of the closed curve $c(\mathbb{R})$.  By Lemma \ref{reachproperties}-(1), Lemma \ref{length} applies, whence $$L\leq l.$$  By Lemmas \ref{derivnotinhemi} and \ref{long}, $$2\pi \leq L.$$ Therefore, $2\pi \leq l$, completing the proof. 
\end{proof}

\section{Proof of Theorem \ref{main theorem}.}\label{proof}  

\begin{proof}
By Theorem \ref{bergerthm} and Lemma \ref{reachproperties}-(3), $$\vol(M) \geq \vol(\mathbb{S}^k) \left(\frac{\inj(M)}{\pi} \right)^k \geq \vol(\mathbb{S}^k).$$  Now suppose that $\vol(M)=\vol(\mathbb{S}^k)$. Then $\inj(M)=\pi$ and $M$ is isometric to the canonical unit sphere $\mathbb{S}^k$.  In particular, each of its geodesics is a closed curve of length $2\pi$ admitting a unit speed parameterization.  By Lemma \ref{reachproperties}-(1) and Proposition \ref{round}, each geodesic in $M$ is a unit circle in $\mathbb{R}^n$.   By \cite[Theorem 4]{Ho}, $M$ and $\mathbb{S}^k$ are congruent.
\end{proof}

\textbf{Acknowledgement. } The authors would like to acknowledge the late Professor Christian Blatter for drawing their attention to the puzzle in \cite{Pe, Wi} through a delightful discussion on math stack exchange.

\bibliographystyle{abbrv}
\bibliography{refs}

\end{document}